\title{Upper bound for the Laplacian eigenvalues of a graph}
\author{Miriam Farber and Ido Kaminer, Technion-Israel Institute of Technology}
\date{}
\newtheorem{theorem}{Theorem}
\newtheorem{corollary}{Corollary}
\begin{document}
\maketitle
\subsection*{Abstract}
In this note we give a new upper bound for the Laplacian eigenvalues of an unweighted graph. Let $G$ be a simple graph on $n$ vertices. Let $d_{m}(G)$ and $\lambda_{m+1}(G)$ be the $m$-th smallest degree of $G$ and the $m+1$-th smallest Laplacian eigenvalue of $G$ respectively. Then $ \lambda_{m+1}(G)\leq d_{m}(G)+m-1 $ for $\bar{G} \neq K_{m}+(n-m)K_1 $. We also introduce upper and lower bound for the Laplacian eigenvalues of weighted graphs, and compare it with the special case of unweighted graphs.

\begin{center}
\subsubsection*{\emph{This note is dedicated to Professor Abraham Berman in recognition of his important contributions to linear algebra and for his inspiring guidance and encouragement}}
\end{center}

\subsection*{1. Introduction}
Let $G=\big(E(G),V(G)\big)$ be a simple graph with $|V(G)|=n$. We say that $G$ is a \emph{weighted graph} if it has a weight (a positive number) associated with each edge. The weight of an edge $ \big\{i,j\big\} \in E(G) $ will be denoted by $ w_{ij} $. We define the \emph{adjacency matrix} $ A(G) $ of $G$ to be a symmetric matrix which satisfies
\begin{equation*}
  a_{ij}=\left\{
    \begin{array}{cl}
      0 & \text {if } \big\{i,j\big\} \notin E(G) \\
      w_{ij} & \text {if } \big\{i,j\big\} \in E(G)
  \end{array} \right.
\end{equation*}
The degree diagonal matrix of $G$ will be denoted by $D(G)=diag\big(d_1(G),d_2(G),\ldots ,d_n(G)\big)$ where\\ 
$d_i(G) =\displaystyle{  \sum_{j} a_{ij}} $ is the degree of the vertex $ v_i $ , and we assume without loss of generality that\\ 
$d_1(G) \leq d_2(G) \leq \ldots \leq d_n(G)$.
We denote by $ L(G)=D(G)-A(G) $ the Laplacian matrix of $ G $, and its eigenvalues
arranged in increasing order: $ 0=\lambda_1(G) \leq \lambda_2(G) \leq \ldots \leq \lambda_n(G) $. The largest and the smallest degrees of $G$ will be denoted by $ \bigtriangleup(G) $ and $ \delta(G) $ respectively.\\
It is well known, e.g. \cite{Mohar}, that for a simple unweighted graph $G$, the eigenvalues of $G$, and of its complement graph $\bar{G}$, are related in the following way:
\begin{align*}
(*) \hspace{1.5 cm} \lambda_i(\bar{G}) &=n-\lambda_{n-i+2}(G) \\
\end{align*}
There are several known lower bounds on the Laplacian eigenvalues of unweighted graph. Grone and Merris \cite{Grone_Merris} proved that $\lambda_n(G) \geq d_n(G)+1$. Li and Pan \cite{Li_Pan} showed that $\lambda_{n-1}(G) \geq d_{n-1}(G)$. And finally, Guo\cite{Guo} gave a lower bound on the third largest Laplacian eigenvalue: $\lambda_{n-2}(G) \geq d_{n-2}(G)-1$. Brouwer and Haemers generalized these bounds in the following theorem\cite{Brouwer_Haemers}:
\begin{theorem}
    Let $G$ be finite simple unweighted graph on $n$ vertices. If $G$ is not $K_{n-i+1}+(i-1)K_1$, then $\lambda_i(G) \geq d_i(G)-n+i+1$.
\end{theorem}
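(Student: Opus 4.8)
The plan is to read the statement as a lower bound on a \emph{large} Laplacian eigenvalue and to attack it through the Courant--Fischer max--min characterization, manufacturing an explicit test subspace out of the high-degree vertices. Since $\lambda_i(G)$ is the $(n-i+1)$-th largest eigenvalue of $L(G)$, I would write
\[
\lambda_i(G)=\max_{\dim S=n-i+1}\ \min_{0\neq x\in S}\ \frac{x^{T}L(G)x}{x^{T}x},
\]
and then aim to exhibit a single subspace $S$ of dimension $n-i+1$ on which the Rayleigh quotient never drops below $d_i(G)-n+i+1$. First I would dispose of the trivial range: if $d_i(G)-n+i+1\le 0$ there is nothing to prove, since $L(G)$ is positive semidefinite; so I may assume $d_i(G)\ge n-i$, i.e. that the graph is dense enough for the $i$ smallest degrees to be large.

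The engine of the construction is a single \emph{hub} vector. For a vertex $v$ of degree $d_v(G)$, set $h_v=d_v(G)\,e_v-\sum_{\{u,v\}\in E(G)}e_u$, where $e_v$ is the standard basis vector. Evaluating $x^{T}L(G)x=\sum_{\{a,b\}\in E(G)}(x_a-x_b)^2$ at $x=h_v$ produces the numerator $d_v(G)(d_v(G)+1)^2$ coming from the edges incident to $v$, plus a nonnegative contribution from the edges leaving the closed neighbourhood of $v$, while $\|h_v\|^2=d_v(G)(d_v(G)+1)$; hence $R(h_v)\ge d_v(G)+1$. I would then take $S=\operatorname{span}\{h_v: v \text{ among the } n-i+1 \text{ vertices of largest degree}\}$, whose smallest attached degree is exactly $d_i(G)$. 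Each individual hub supplies at least $d_i(G)+1$, and the arithmetic $d_i(G)+1-(n-i)=d_i(G)-n+i+1$ shows that, provided combining the hubs costs at most one unit for each of the $n-i$ extra dimensions, the claimed bound falls out.

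The hard part is precisely making that ``one unit per dimension'' loss rigorous, i.e. bounding $\min_{x\in S}R(x)$ for an \emph{arbitrary} combination $x=\sum_k c_k h_{v_k}$ rather than for the hubs one at a time. This forces me to control two small structured matrices attached to the family $\{h_{v_k}\}$: their Gram matrix $h_{v_k}^{T}h_{v_\ell}$ (to handle the denominator and to verify linear independence, so that indeed $\dim S=n-i+1$) and the interaction matrix $h_{v_k}^{T}L(G)h_{v_\ell}$ (for the numerator). Both are governed by the adjacencies among the high-degree vertices and reduce to the submatrix $D_W-A(G)[W]$ on the hub set $W$; its degeneracy is exactly where the guaranteed loss reaches the full $n-i$. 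Tracking that extremal configuration --- the high-degree vertices forming a clique detached from the rest --- is what should single out $\bar G=K_{n-i+1}+(i-1)K_1$ as the unique case to be excluded, and I expect this extremal bookkeeping to be the main obstacle.

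As a cross-check and an alternative framing, I would invoke the complement relation $(*)$: substituting $\lambda_i(\bar G)=n-\lambda_{n-i+2}(G)$ together with $d_i(\bar G)=n-1-d_{n-i+1}(G)$ converts the statement into the equivalent upper bound $\lambda_{m+1}(G)\le d_m(G)+m-1$ with $m=n-i+1$, the form advertised in the abstract. This also explains why a purely interlacing-based attack is not enough: bounding $\lambda_i(G)$ by the extreme eigenvalue of the principal submatrix $L(G)[W]$ on the top-degree vertices yields, via Gershgorin, at best $d_i(G)-n+i$ --- one short of the target, and already inadequate on small examples such as $P_3$. This gap is the concrete evidence that the sharper subspace argument above, with its careful treatment of the extremal clique, is the ingredient that actually closes the bound.
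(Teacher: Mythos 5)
First, a point of orientation: the statement you attacked is Theorem 1 of this paper, which the authors do not prove at all --- it is quoted from Brouwer and Haemers \cite{Brouwer_Haemers}, and the paper's own result (Theorem 2) is \emph{deduced from it} by complementation via $(*)$. So there is no in-paper proof to compare against; the relevant benchmark is the original Brouwer--Haemers argument, which is a delicate structural analysis, not a single application of Courant--Fischer to one cleverly chosen subspace.

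Your proposal contains several correct ingredients: the reduction to the case $d_i(G)\ge n-i$ is valid because $L(G)$ is positive semidefinite; the single-hub computation is right (with $h_v=L(G)e_v$ one gets $\|h_v\|^2=d_v(d_v+1)$ and $h_v^{T}L(G)h_v\ge d_v(d_v+1)^2$, hence a Rayleigh quotient of at least $d_v+1$, which for $i=n$ already reproves Grone--Merris); and your diagnosis that interlacing plus Gershgorin on $L(G)[W]$ only yields $d_i(G)-n+i$ is accurate. But the proof has a genuine gap exactly where you flag ``the hard part'': nothing in the proposal bounds $\min_{0\neq x\in S}\frac{x^{T}L(G)x}{x^{T}x}$ over the \emph{span} of the $n-i+1$ hub vectors. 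The heuristic that ``combining the hubs costs at most one unit per extra dimension'' is not a general principle --- enlarging a test subspace by one vector can drop the minimum Rayleigh quotient by an arbitrary amount --- and you identify no structural property of the hub family (via the Gram matrix and the interaction matrix you mention) that would enforce it. Proving that the total loss is at most $n-i$, with $G=K_{n-i+1}+(i-1)K_1$ as the unique exception (note: the excluded graph is $G$ itself, not $\bar{G}$ as written in your third paragraph), is not a bookkeeping detail; it \emph{is} the theorem, restated. Two further problems: the vectors $h_v=L(G)e_v$ lie in the column space of $L(G)$, whose dimension is $n$ minus the number of connected components, so for disconnected graphs your family need not be linearly independent and $\dim S$ can fall below $n-i+1$, making Courant--Fischer bound a lower-indexed eigenvalue than $\lambda_i(G)$. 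And within this paper your ``cross-check'' via $(*)$ cannot serve as a fallback proof: the equivalent upper bound $\lambda_{m+1}(G)\le d_m(G)+m-1$ is precisely Theorem 2, which is itself derived from the statement you are trying to prove, so that route is circular here.
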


\subsection*{2. The unweighted case}
The main result in this note is:
\begin{theorem}
    Let $G$ be finite simple unweighted graph on $n$ vertices. If $\bar{G} \neq K_{m}+(n-m)K_1 $, then $ \lambda_{m+1}(G)\leq d_{m}(G)+m-1 $.
\end{theorem}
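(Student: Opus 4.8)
The plan is to deduce Theorem 2 from the Brouwer--Haemers lower bound (Theorem 1) applied to the complement graph $\bar{G}$, and then transfer the result back to $G$ via the complement eigenvalue relation $(*)$. The guiding observation is that an \emph{upper} bound on a \emph{low} Laplacian eigenvalue of $G$ becomes, after complementation, a \emph{lower} bound on a \emph{high} Laplacian eigenvalue of $\bar{G}$, and the latter is exactly the type of statement Theorem 1 provides. First I would rewrite the target: setting $i=n-m+1$ in $(*)$ gives $\lambda_{m+1}(G)=n-\lambda_{n-m+1}(\bar{G})$, so the desired inequality $\lambda_{m+1}(G)\leq d_m(G)+m-1$ is equivalent to the lower bound $\lambda_{n-m+1}(\bar{G})\geq n-d_m(G)-m+1$.

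The next step is the degree bookkeeping, which is the one place demanding care. Since complementation sends each vertex degree $d$ to $n-1-d$, it reverses the sorted order of the degrees; hence the increasingly ordered degrees of $\bar{G}$ satisfy $d_k(\bar{G})=(n-1)-d_{n-k+1}(G)$. In particular, at the index we need, $d_{n-m+1}(\bar{G})=(n-1)-d_m(G)$.

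Now I would apply Theorem 1 to $\bar{G}$ at index $j=n-m+1$. Provided $\bar{G}\neq K_{n-j+1}+(j-1)K_1=K_m+(n-m)K_1$, the theorem gives $\lambda_j(\bar{G})\geq d_j(\bar{G})-n+j+1$. Substituting $j=n-m+1$ together with $d_{n-m+1}(\bar{G})=(n-1)-d_m(G)$ and simplifying yields exactly $\lambda_{n-m+1}(\bar{G})\geq n-d_m(G)-m+1$, which is the reformulated claim; feeding this back through $(*)$ produces $\lambda_{m+1}(G)\leq d_m(G)+m-1$. Note that the exclusion hypothesis $\bar{G}\neq K_m+(n-m)K_1$ in the statement is precisely the forbidden configuration $K_{n-j+1}+(j-1)K_1$ of Theorem 1 at $j=n-m+1$, so the hypothesis is used exactly once and in exactly the right form.

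The main obstacle here is not analytic but combinatorial index-tracking: one must keep straight the two independent order reversals --- of eigenvalues through $(*)$ and of degrees through $d\mapsto n-1-d$ --- and verify that the resulting four index shifts align so that both the degree term and the additive constant match with no slack. If any of these shifts were off by one, the additive constant would emerge as something other than $m-1$, so the genuine content of the argument lies in confirming that the arithmetic collapses exactly as claimed.
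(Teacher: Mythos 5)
Your proposal is correct and follows exactly the paper's own argument: both apply Theorem 1 to $\bar{G}$ at index $n-m+1$, use the relation $(*)$ in the form $\lambda_{n-m+1}(\bar{G})=n-\lambda_{m+1}(G)$, and the degree identity $d_{n-m+1}(\bar{G})=n-1-d_m(G)$, with the excluded graph $K_m+(n-m)K_1$ arising precisely as the forbidden configuration of Theorem 1 at that index. The index arithmetic in your write-up checks out, so there is nothing to add.
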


\begin{proof}
    The degree sequence of $\bar{G}$ is $n-1-d_n(G) \leq n-1-d_{n-1}(G) \leq \ldots \leq n-1-d_1(G)$. From this it follows by Theorems 1 and (*) that:
    \begin{align}
    \lambda_{n-m+1}(\bar{G})                 & =  n-\lambda_{m+1}(G)  \\
    \lambda_{n-m+1}(\bar{G})                  & \geq d_{n-m+1}(\bar{G})-m+2  \text{  for  }  \bar{G} \neq K_m+(n-m)K_1 \\
    d_{n-m+1}(\bar{G})                           & = n-1-d_m(G)
    \end{align}
    From (1) and (2) we get $  n-\lambda_{m+1}(G)\geq d_{n-m+1}(\bar{G})-m+2 $. Combining it with (3) yields:\\ $ n-\lambda_{m+1}(G)\geq n-1-d_m(G)-m+2 $. Therefore $ \lambda_{m+1}(G)\leq d_{m}(G)+m-1 $ for $\bar{G} \neq K_{m}+(n-m)K_1 $.
\end{proof}

One of the consequences of this theorem, is that now we can describe the intervals in which the Laplacian eigenvalues lies.

\begin{corollary}
    Let $G$ be finite simple unweighted graph on $n$ vertices, and suppose $G \neq K_{n-m+1}+(m-1)K_1 $ and $ \bar{G} \neq K_{m-1}+(n-m+1)K_1 $. Then $ d_m(G)-n+m+1\leq \lambda_{m}(G)\leq d_{m-1}(G)+m-2 $.\\
    In addition, if $ d_m(G)= d_{m-1}(G)+n-3 $, then the inequality in Theorems 1 and 2 becomes equality.
\end{corollary}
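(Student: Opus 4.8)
The plan is to read off the two displayed inequalities directly from the two theorems already available, and then to obtain the equality statement by a substitution that forces the lower and upper endpoints of the interval to coincide. In fact the corollary is a straightforward specialization of Theorems 1 and 2, so the only real work is lining up the indices and the forbidden-graph conditions correctly.

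First I would establish the lower bound $d_m(G) - n + m + 1 \leq \lambda_m(G)$. This is nothing but Theorem 1 read at $i = m$: that theorem asserts $\lambda_i(G) \geq d_i(G) - n + i + 1$ whenever $G \neq K_{n-i+1} + (i-1)K_1$, and putting $i = m$ gives exactly $\lambda_m(G) \geq d_m(G) - n + m + 1$. The graph excluded at $i = m$ is $K_{n-m+1} + (m-1)K_1$, which is precisely the first forbidden case in the hypothesis, so the bound is legitimate.

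Next I would establish the upper bound $\lambda_m(G) \leq d_{m-1}(G) + m - 2$. The key observation is that this is Theorem 2 with $m$ replaced by $m-1$: Theorem 2 gives $\lambda_{m'+1}(G) \leq d_{m'}(G) + m' - 1$ for $\bar{G} \neq K_{m'} + (n - m')K_1$, and taking $m' = m - 1$ yields $\lambda_m(G) \leq d_{m-1}(G) + m - 2$, with excluded graph $K_{m-1} + (n-m+1)K_1$ — exactly the second forbidden case in the hypothesis. Combining the two bounds produces the stated interval. The one point that requires care is this index shift: I must check that the forbidden-graph conditions assumed in the corollary match up with the exclusions demanded by Theorems 1 and 2. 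This bookkeeping is the only place where an error could creep in, and it is the main (though routine) obstacle; there is no deeper difficulty.

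Finally, for the equality claim I would substitute the assumption $d_m(G) = d_{m-1}(G) + n - 3$ into the lower endpoint, obtaining $d_m(G) - n + m + 1 = (d_{m-1}(G) + n - 3) - n + m + 1 = d_{m-1}(G) + m - 2$, which is precisely the upper endpoint. Hence the two-sided inequality collapses, squeezing $\lambda_m(G)$ between two equal quantities and forcing $\lambda_m(G) = d_{m-1}(G) + m - 2 = d_m(G) - n + m + 1$. Since both extreme values are attained, the inequality of Theorem 1 (at $i = m$) and that of Theorem 2 (at $m-1$) must each hold with equality, which is exactly the assertion.
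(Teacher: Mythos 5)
Your proposal is correct and follows exactly the paper's own route: the lower bound is Theorem 1 at $i=m$, the upper bound is Theorem 2 with the index shifted to $m-1$ (each exclusion matching a hypothesis of the corollary), and the equality claim follows from the same substitution $d_m(G)=d_{m-1}(G)+n-3$ collapsing the two endpoints. Your write-up is in fact slightly more careful than the paper's, which dispatches the first part with ``follows immediately from Theorems 1 and 2.''
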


Example : Let $ G $ be a graph that is constructed from $ K_{1,n-1} $ by adding an edge (or edges) between pairs of unconnected vertices of $ K_{1,n-1} $, such that $ d_{n-1}(G)=2 $. Then equality holds for $ \lambda_{n}(G) $ .

\begin{proof}
    The first part follows immediately from Theorems 1 and 2. Now, if $ d_m(G)= d_{m-1}(G)+n-3 $, then  $ d_m(G)-n+m+1= d_{m-1}(G)+n-3-n+m+1 =d_{m-1}(G)+m-2$, hence the inequalities becomes equalities.
\end{proof}

\subsection*{3. The weighted case}
Let $G$ be a simple weighted graph. Let $l_m=\big\{v_m,v_{m+1}, \ldots , v_n \big\}$ be the set of $n-m+1$ vertices with the largest degrees in $G$, and let $S_m=\big\{v_1,v_{2}, \ldots , v_m \big\}$ be the set of $m$ vertices with the smallest degrees in $ G $. Define $G_{l_m}$ and $G_{S_m}$ be the subgraphs induced from $G$ by $l_m$ and $S_m$ respectively. Now we are ready to prove the following:
\begin{theorem}
Let $ G $ be a simple weighted graph. Then $ \lambda_{m}(G) \geq d_m(G)- \bigtriangleup(G_{l_m})$
\end{theorem}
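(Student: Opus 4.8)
The plan is to bound $\lambda_m(G)$ from below by the smallest eigenvalue of a carefully chosen principal submatrix of $L(G)$, and then to estimate that smallest eigenvalue by hand. Since $l_m=\{v_m,\dots,v_n\}$ contains exactly $n-m+1$ vertices, let $B$ denote the $(n-m+1)\times(n-m+1)$ principal submatrix of $L(G)$ obtained by retaining the rows and columns indexed by $l_m$. By the Cauchy interlacing theorem, if $\beta_1\le\cdots\le\beta_{n-m+1}$ are the eigenvalues of $B$, then $\beta_i\le\lambda_{i+m-1}(G)$; taking $i=1$ gives $\lambda_m(G)\ge\beta_1=\lambda_{\min}(B)$. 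Thus it suffices to prove $\lambda_{\min}(B)\ge d_m(G)-\bigtriangleup(G_{l_m})$.

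Next I would analyze $B$ precisely. Its entries are $B_{ii}=d_i(G)$ for $i\in l_m$ and $B_{ij}=-w_{ij}$ for distinct $i,j\in l_m$. The key observation is that $B$ is \emph{not} the Laplacian $L(G_{l_m})$ of the induced subgraph: the off-diagonal entries coincide, but the diagonal of $B$ records the full degrees $d_i(G)$ rather than the induced degrees $d_i(G_{l_m})$. I would therefore split $B=L(G_{l_m})+E$, where $E=\mathrm{diag}\big(d_i(G)-d_i(G_{l_m})\big)_{i\in l_m}$ is a nonnegative diagonal matrix whose $i$-th entry is the total weight of edges leaving $v_i$ toward vertices outside $l_m$.

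The estimate then follows from a Rayleigh-quotient argument. For any $x$ supported on $l_m$ one has $x^\top Bx=x^\top L(G_{l_m})x+x^\top Ex$. Because $L(G_{l_m})$ is the Laplacian of a nonnegatively weighted graph it is positive semidefinite, so the first term is $\ge0$. For the second term, each diagonal entry satisfies $d_i(G)-d_i(G_{l_m})\ge d_m(G)-\bigtriangleup(G_{l_m})$, since $d_i(G)\ge d_m(G)$ for every $i\in l_m$ (as $v_m$ has the smallest degree among the vertices of $l_m$) while $d_i(G_{l_m})\le\bigtriangleup(G_{l_m})$ by definition of the maximal degree. Hence $x^\top Ex\ge\big(d_m(G)-\bigtriangleup(G_{l_m})\big)\|x\|^2$, giving $x^\top Bx\ge\big(d_m(G)-\bigtriangleup(G_{l_m})\big)\|x\|^2$ and therefore $\lambda_{\min}(B)\ge d_m(G)-\bigtriangleup(G_{l_m})$. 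Combined with the interlacing step this establishes the claim.

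The step I expect to demand the most care is the realization that the natural submatrix $B$ is not the induced Laplacian, and the resulting need to peel off the boundary-degree diagonal $E$; once that is done, the positive semidefiniteness of $L(G_{l_m})$ and the elementary degree comparison finish the proof. The remaining place where a sign slip could creep in is the orientation of the interlacing inequality, which should be checked to confirm that $i=1$ yields an upper bound on $\beta_1$ by $\lambda_m(G)$. (The inequality remains valid, though uninformative, when $d_m(G)-\bigtriangleup(G_{l_m})<0$, as it is then weaker than the trivial bound $\lambda_m(G)\ge0$.)
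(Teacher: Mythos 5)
Your proof is correct, and it diverges from the paper's in a meaningful way. The paper argues via the variational characterization of $\lambda_m$: it builds a test vector $g$ supported on $l_m$ and orthogonal to the eigenvectors $f_{m+1},\ldots,f_n$ (possible because these are $n-m$ linear conditions on $n-m+1$ free coordinates), so that $\lambda_m(G)\geq \langle L(G)g,g\rangle/\langle g,g\rangle$. Your appeal to Cauchy interlacing packages exactly this step, since the inequality $\lambda_{\min}(B)\leq\lambda_m(G)$ for a principal submatrix on $n-m+1$ rows is proved by the same dimension count; so the first halves are essentially equivalent, yours as a black box, the paper's hand-rolled. The genuine difference is in estimating the restricted quadratic form. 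The paper splits $L=D-A$, bounds the diagonal part below by $d_m(G)$, and controls $\langle A(G)g,g\rangle/\langle g,g\rangle$ by the largest eigenvalue of $A(G_{l_m})$, which it then bounds by $\bigtriangleup(G_{l_m})$ via the Gersgorin disc theorem. You instead split $B=L(G_{l_m})+E$, discard the induced Laplacian by positive semidefiniteness, and bound the diagonal boundary-degree matrix $E$ entrywise. Your decomposition avoids Gersgorin (indeed any spectral bound on the adjacency matrix), needing only the PSD-ness of a nonnegatively weighted Laplacian, and it exposes where slack is lost: it actually yields the intermediate bound $\lambda_m(G)\geq\min_{i\in l_m}\big(d_i(G)-d_i(G_{l_m})\big)$, which can be strictly stronger than the stated theorem before being relaxed to $d_m(G)-\bigtriangleup(G_{l_m})$. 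The paper's route, by contrast, is self-contained given only the two facts it cites. Your flagged concern about the orientation of interlacing is resolved correctly: with $B$ of order $n-m+1$, the theorem gives $\lambda_i(G)\leq\beta_i\leq\lambda_{i+m-1}(G)$, so $i=1$ indeed gives $\beta_1\leq\lambda_m(G)$.
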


\begin{proof}
We will use the following well known result about eigenvalues, e.g \cite{Horn_Johnson}.

\begin{center}(**)
     Let $A$ be a symmetric matrix. Then $ \lambda_{k}(A) = max\big\{\frac{\langle Af,f \rangle}{\langle f,f \rangle} | f \bot f_{k+1},f_{k+2}, \ldots f_n\big\} $ ,when $ f_{k+1},f_{k+2}, \ldots f_n $ are eigenvectors of the eigenvalues $ \lambda_{k+1},\lambda_{k+2}, \ldots \lambda_n $ respectively.
\end{center}

    Let $ f_{m+1},f_{m+2}, \ldots f_{n} $ be the eigenvectors of the eigenvalues $ \lambda_{m+1},\lambda_{m+2}, \ldots \lambda_{n} $ respectively. We define a vector $g$ in the following way:

   \begin{equation*}
    g_j=\left\{
    \begin{array}{cl}
      0 & \text {if } j \notin l_m \\
      c_{v_j} & \text {if } v_j \in l_m
    \end{array} \right.
    \end{equation*}
    ,where $ \big\{ c_{v_m},c_{v_{m+1}}, \ldots ,c_{v_n} \big\} $ is a set of variables such that $ g \bot f_i $ for all $ m+1 \leq i \leq n $ . Note that since we have $ n-m $ equations, with $ n-m+1 $ variables, there exists such nontrivial set (i.e. not all the variables are zero).From (**) it follows:
    \begin{align}
    \lambda_{m}(G) & = max\big\{\frac{\langle Lf,f \rangle}{\langle f,f \rangle} | f \bot f_{m+1},f_{m+2}, \ldots f_n\big\} \geq \frac{\langle L(G)g,g \rangle}{\langle g,g \rangle} \nonumber \\
    & = \frac{\langle D(G)g,g \rangle}{\langle g,g \rangle}-\frac{\langle A(G)g,g \rangle}{\langle g,g \rangle} \nonumber \\
    & = \frac{\displaystyle{  \sum_{j=m}^{n} d_j(G)c_{v_j}^2}}{\displaystyle{  \sum_{j=m}^{n} c_{v_j}^2}}-\frac{\langle A(G)g,g \rangle}{\langle g,g \rangle} \nonumber \\
    \end{align}

    Now, from (**) and the construction of $g$, $ \frac{\langle A(G)g,g \rangle}{\langle g,g \rangle} $ is smaller than or equal to the largest eigenvalue of $ A(G_{l_m}) $, which, according to the Gersgorin Disc Theorem (e.g \cite{Horn_Johnson}), is smaller than or equal to $\bigtriangleup(G_{l_m})$.
    Finally, from (4) we get $ \lambda_{m}(G) \geq d_m(G)- \bigtriangleup(G_{l_m})$
\end{proof}

Our goal now is to find an upper bound on the eigenvalues of weighted graph.
Let $ a $ be the maximal weight of an edge in $ G $. We normalize $G$ by dividing its weights by $a$, and denote the normalized graph by $\hat{G}$. The complement graph of $ \hat{G} $, $ \bar{\hat{G}} $ is defined in the following way: $ V(\hat{G})=V(\bar{\hat{G}}) , w_{ij}(\bar{\hat{G}})=1-w_{ij}(\hat{G}) $. For $ \hat{G} $ and $ \bar{\hat{G}} $ , (*) holds (e.g. \cite{Mohar}).

We are ready now to present the upper bound for the Laplacian eigenvalues of weighted graphs:

\begin{theorem}
Let $ G $ be a simple weighted graph, and let $ a $ be the maximal weight of an edge in $ G $.
Then $ d_{m}(G) + ma - \delta(G_{S_m})  \geq  \lambda_{m+1}(G)  $.
\end{theorem}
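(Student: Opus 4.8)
The plan is to mirror the proof of Theorem 2, deriving the upper bound from the lower bound of Theorem 3 applied to a complement graph. First I would pass to the normalized graph $\hat{G}$, whose edge weights are those of $G$ divided by $a$. Since $L(\hat{G}) = L(G)/a$ and $D(\hat{G}) = D(G)/a$, every Laplacian eigenvalue and every degree (including those of induced subgraphs) scales by the factor $1/a$; in particular $\lambda_{m+1}(\hat{G}) = \lambda_{m+1}(G)/a$, $d_m(\hat{G}) = d_m(G)/a$ and $\delta(\hat{G}_{S_m}) = \delta(G_{S_m})/a$. Establishing the inequality for $\hat{G}$ and then multiplying through by $a$ will therefore give the claim for $G$.

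Working with $\hat{G}$, I would apply Theorem 3 to the complement $\bar{\hat{G}}$ at the index $n-m+1$, exactly as the proof of Theorem 2 invoked Theorem 1 on $\bar{G}$. By $(*)$ we have $\lambda_{n-m+1}(\bar{\hat{G}}) = n - \lambda_{m+1}(\hat{G})$, so a lower bound on the left-hand side turns into an upper bound on $\lambda_{m+1}(\hat{G})$. Theorem 3 supplies $\lambda_{n-m+1}(\bar{\hat{G}}) \geq d_{n-m+1}(\bar{\hat{G}}) - \bigtriangleup\big((\bar{\hat{G}})_{l_{n-m+1}}\big)$, where $l_{n-m+1}$ denotes the set of the $m$ vertices of largest degree in $\bar{\hat{G}}$.

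The key bookkeeping is to translate the quantities on the right-hand side back into data of $G$. Since $d_j(\bar{\hat{G}}) = (n-1) - d_j(\hat{G})$, the vertices of largest degree in $\bar{\hat{G}}$ are precisely those of smallest degree in $\hat{G}$, so $l_{n-m+1} = S_m$; sorting the degrees then gives $d_{n-m+1}(\bar{\hat{G}}) = (n-1) - d_m(\hat{G})$. The step I expect to need the most care is the identity $\bigtriangleup\big((\bar{\hat{G}})_{S_m}\big) = (m-1) - \delta(\hat{G}_{S_m})$: inside the $m$-vertex subgraph induced on $S_m$, each vertex has complementary weighted degree equal to $(m-1)$ minus its degree in $\hat{G}_{S_m}$, so maximizing the former is the same as minimizing the latter.

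Substituting these three identities into the Theorem 3 inequality and using $(*)$ yields $n - \lambda_{m+1}(\hat{G}) \geq (n-1) - d_m(\hat{G}) - (m-1) + \delta(\hat{G}_{S_m})$, which rearranges to $\lambda_{m+1}(\hat{G}) \leq d_m(\hat{G}) + m - \delta(\hat{G}_{S_m})$. Rescaling by $a$ then produces $\lambda_{m+1}(G) \leq d_m(G) + ma - \delta(G_{S_m})$, as desired. The main obstacle is therefore combinatorial rather than analytic: correctly matching the index shift coming from Theorem 3 and $(*)$ with the complement-degree relations, and verifying that the induced-subgraph maximum degree in $\bar{\hat{G}}$ indeed converts into the minimum degree $\delta(G_{S_m})$ that appears in the statement.
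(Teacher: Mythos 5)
Your proposal is correct and follows essentially the same route as the paper: normalize to $\hat{G}$, apply Theorem 3 to the complement $\bar{\hat{G}}$ at index $n-m+1$ (whose top-$m$-degree set is exactly $S_m$), convert via $(*)$ and the complement-degree identities, and rescale by $a$. If anything, your bookkeeping is slightly more careful than the paper's, which writes $\delta(G_{S_m})$ where $\delta(\hat{G}_{S_m})$ is meant in the intermediate steps before the final rescaling.
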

\begin{proof}
    The proof is very similar to the proof of theorem 2. First, we define the graph $ \hat{G} $ , which is constructed from G as we explained above. We notice that $S_m$ is the set of $ m $ vertices with the largest degrees in $ \bar{\hat{G}} $. The graph H would be the graph that induced from $ \bar{\hat{G}} $ by $S_m$. Prepositions (1), (3) (from the proof of Theorem 2) still holds for $ \hat{G} $, and instead of (2), we can write, using Theorem 3,
    \begin{align*}
    \lambda_{n-m+1}(\bar{\hat{G}})\geq d_{n-m+1}(\bar{\hat{G}}) - \bigtriangleup(H).
    \end{align*}
    By following the steps from the proof of Theorem 2, we get
    \begin{align*}
    d_{m}(\hat{G}) + 1 + \bigtriangleup(H) \geq  \lambda_{m+1}(\hat{G}).
    \end{align*}
    and since
    \begin{align*}
      \bigtriangleup(H) = m-1 - \delta(G_{S_m})
     \end{align*}
     we have
     \begin{align*}
     d_{m}(\hat{G}) + m - \delta(G_{S_m})  \geq  \lambda_{m+1}(\hat{G}).
     \end{align*}
     We conclude that
     \begin{align*}
     d_{m}(G) + ma - \delta(G_{S_m})  \geq  \lambda_{m+1}(G)
     \end{align*}
\end{proof}

\begin{corollary}
    Let $G$ be finite simple weighted graph on $n$ vertices, and denote by $ a $ the maximal weight of an edge in $ G $ . Then $   d_m(G)- \bigtriangleup(G_{l_m}) \leq  \lambda_{m}(G) \leq d_{m-1}(G) + (m-1)a - \delta(G_{S_{m-1}}) $.
\end{corollary}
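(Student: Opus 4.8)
The plan is to obtain the two inequalities separately, each by invoking one of the two weighted-case theorems already established, and then simply to concatenate them. Since the statement is a sandwich bound on a single eigenvalue $\lambda_m(G)$, there is no interaction between the two sides; the only thing requiring care is getting the index bookkeeping right, because the two theorems are indexed against $\lambda_m$ and $\lambda_{m+1}$ respectively.

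First I would establish the left inequality. This is immediate: Theorem 3 asserts directly that $\lambda_m(G) \geq d_m(G) - \bigtriangleup(G_{l_m})$, which is exactly the lower bound claimed. No substitution or manipulation is needed here.

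Next I would establish the right inequality. Here the idea is to apply Theorem 4 not at index $m$ but at index $m-1$. Theorem 4 states that for every admissible index $k$ one has $d_k(G) + ka - \delta(G_{S_k}) \geq \lambda_{k+1}(G)$. Setting $k = m-1$ turns the right-hand side into $\lambda_{(m-1)+1}(G) = \lambda_m(G)$, while the left-hand side becomes $d_{m-1}(G) + (m-1)a - \delta(G_{S_{m-1}})$. This is precisely the upper bound in the corollary. The sets $l_m$, $S_m$, and $S_{m-1}$ are defined as before in terms of the ordering of the degrees, so the quantity $\delta(G_{S_{m-1}})$ appearing on the right is well defined.

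I do not anticipate a genuine obstacle in this argument, as it is a direct corollary and not a new result. The one point worth stating explicitly is that the index shift $k = m-1$ in Theorem 4 is legitimate, i.e.\ that $m-1$ is a valid index for which the theorem applies; once this is noted, combining the two displayed inequalities yields $d_m(G) - \bigtriangleup(G_{l_m}) \leq \lambda_m(G) \leq d_{m-1}(G) + (m-1)a - \delta(G_{S_{m-1}})$, completing the proof.
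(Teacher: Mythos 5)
Your proposal is correct and matches the paper's intent exactly: the corollary is stated immediately after Theorem 4 with no written proof, precisely because it follows by combining Theorem 3 (applied at index $m$) for the lower bound with Theorem 4 (applied at index $m-1$, so that $\lambda_{(m-1)+1} = \lambda_m$) for the upper bound. Your careful handling of the index shift is the only nontrivial bookkeeping involved, and you got it right.
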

    In the special case of unweighted graphs, the bounds reduce to  $   d_m(G)- n+m  \leq  \lambda_{m}(G) \leq d_{m-1}(G) + m-1 $. It is interesting to compare it with the stronger result from Section 2: $ d_m(G)-n+m+1\leq \lambda_{m}(G)\leq d_{m-1}(G)+m-2 $.

\bibliographystyle{plain}

\end{document}